\setlist{nosep}
\title[Chapter 5]{On the uniruled Voisin divisor on the LLSvS variety}
\author{Franco Giovenzana}
\address{Fakult\"at f\"ur Mathematik\\
Technische Universit\"at Chemnitz \\
Reichenhainer Strasse 39 \\
09126 Chemnitz \\
Germany \\
}
\email{franco.giovenzana@mathematik.tu-chemnitz.de}
\newcommand \bL {\mathbb L}
\renewcommand \O {\mathcal O}
\renewcommand \P {\mathbb P}
\newcommand \bZ {\mathbb Z}
\newcommand \lra \longrightarrow
\newcommand \xra \xrightarrow
\newcommand \xla \xleftarrow
\newcommand \x \times
\DeclareMathOperator \Pic {Pic}
\DeclareMathOperator \Hilb {Hilb}
\DeclareMathOperator \Gr {Gr}
\newcommand \sing {\mathrm{sing}}
\newcommand \gtc {\mathrm{gtc}}
\newtheorem{thm}{Theorem}
\newtheorem{prop}[thm]{Proposition}
\newtheorem{cor}[thm]{Corollary}
\newtheorem{lem}[thm]{Lemma}
\newtheorem{defi}[thm]{Definition}
\theoremstyle{definition}
\renewcommand \phi \varphi
\begin{document}

\maketitle
\begin{abstract}
Let $Y$ be a smooth cubic fourfold, $F$ be its Fano variety of lines and $Z$ be its associated LLSvS variety, parametrizing families of twisted cubics and some of their degenerations. In this short note, we show that the divisor of singular cubic surfaces on $Z$ has two irreducible components, one of which coincides with the uniruled branch divisor of a resolution of the Voisin map $F\times F \dashrightarrow Z$.
\end{abstract}
 
\section*{Introduction}

Given a smooth cubic fourfold $Y\subset \P^5$, its Fano variety of lines $F$ is a hyperk\" ahler variety \cite[Proposition~1]{BD}. Assuming further that $Y$ does not contain any plane, one constructs starting from the variety of twisted cubics another hyperk\" ahler variety $Z$ of dimension 8 \cite[Theorem B]{LLSvS}. Their geometry is deeply related as Voisin showed by constructing a degree 6 rational map \cite[Proposition~4.8]{Voisin}
\[
\phi: F\x F \dashrightarrow Z.
\]

Twisted cubics have Hilbert polynomial $3t+1$, their flat degenerations are called generalised twisted cubics.
Generalised twisted cubics on a singular cubic surface of type $A_1$ fall in three different types, which we are going to call $\alpha, \beta$, and $\gamma$. They determine two divisors $D_\alpha, D_\beta$, on $Z$, see Definition \ref{def-divisors}.

\begin{thm}
The divisor $D \subset Z$ of generalised twisted cubics lying on singular cubic surfaces has two irreducible components
\[
D = D_\alpha \cup D_\beta .
\]
The first component coincides with the uniruled branch divisor of a resolution of the Voisin map.
\end{thm}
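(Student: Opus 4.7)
The argument splits into two steps: establishing the decomposition $D = D_\alpha \cup D_\beta$ and identifying $D_\alpha$ as the uniruled branch component.

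For the first step, I would stratify $D$ by the type ($\alpha$, $\beta$, or $\gamma$) of the generalised twisted cubic on its ambient singular cubic surface, and show that the $\alpha$- and $\beta$-strata each form a divisor while the $\gamma$-stratum has codimension at least $2$ in $Z$. The key input is the classification of generalised twisted cubics on a cubic surface with a single $A_1$ singularity: each stratum is realised as a family over the divisor of nodal cubic surfaces on $Y$, and comparing the fibrewise dimensions with the codimension of this divisor yields the desired codimensions in $Z$. Irreducibility of $D_\alpha$ and $D_\beta$ follows by presenting each as the image of an irreducible incidence variety parametrising pairs (nodal cubic surface on $Y$, generalised twisted cubic of the prescribed type on it).

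For the second step, let $\pi\colon\widetilde{F\x F}\to F\x F$ be a resolution of the indeterminacy of $\phi$ and $\widetilde\phi\colon\widetilde{F\x F}\to Z$ the resulting generically finite morphism of degree $6$. A point $z\in Z$ lies in the branch divisor of $\widetilde\phi$ precisely when two of its six preimages collide. I would analyse this degeneration in terms of the configuration of lines on the cubic surface spanned by the generalised twisted cubic $C_z$: when this surface acquires an $A_1$ singularity and $C_z$ is of $\alpha$-type, two of the preimage pairs of lines merge because the residuation relation underlying $\phi$ forces them to, whereas for a $\beta$-type cubic the six pairs remain distinct. This identifies the image of the ramification divisor with $D_\alpha$.

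The principal technical obstacle will be this local analysis of $\widetilde\phi$ near the branch, and in particular checking that $\alpha$-type cubics (and not $\beta$-type) correspond to the ramified configurations. Uniruledness of $D_\alpha$ should then follow either from the general principle that branch divisors of generically finite rational maps from hyperk\"ahler varieties are uniruled, or directly by exhibiting a rational pencil of $\alpha$-type cubics on a fixed nodal cubic surface and thus a rational curve through a general point of $D_\alpha$.
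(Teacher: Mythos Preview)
Your handling of the $\gamma$-stratum contains a genuine gap. The fibrewise dimension count you propose cannot distinguish $\gamma$ from $\alpha$ and $\beta$: over each $A_1$-singular cubic surface the map $b\colon Z'\to\Gr(V,4)$ is finite, with $30$ points of type $\alpha$, $20$ of type $\beta$, and $1$ of type $\gamma$ in the fibre, so all three strata are $7$-dimensional in $Z'$ and your argument would produce three divisorial components, not two. The actual reason the $\gamma$-component disappears in $Z$ is structural rather than dimension-theoretic: $D'_\gamma$ coincides with the divisor $D_{nCM}$ of non--Cohen--Macaulay curves, and it is precisely this divisor that the contraction $\pi\colon Z'\to Z$ collapses onto the embedded copy $Y\hookrightarrow Z$. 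You never invoke $\pi$ or the nCM locus, and without this the decomposition $D=D_\alpha\cup D_\beta$ cannot be concluded.

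Your approach to the second step is also substantively different from the paper's, and the mechanism you describe is not the one that is used. The paper does not detect branching by watching two of the six regular preimages of $\widetilde\phi$ collide over an $\alpha$-type point; instead it analyses the \emph{indeterminacy locus} $I\subset F\times F$ of incident lines. One shows that the graph closure $\Gamma\to F\times F$ has $\mathbb P^1$-fibres over a general point of $I$, so the exceptional locus is irreducible and uniruled, and a limit computation along curves into $I$ shows that the limiting family of twisted cubics on the (necessarily nodal) limit surface is of type $\alpha$. Thus $D_\alpha$ is \emph{by construction} the image of this uniruled exceptional divisor, which is what ``the uniruled branch divisor of a resolution'' means here in the sense of Voisin and Muratore. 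Irreducibility of $D_\alpha$ then comes for free from the irreducibility of $I$, and irreducibility of $D_\beta$ is obtained analogously from the variety $I_3\subset F^3$ of triples of concurrent lines rather than from a generic incidence variety of the sort you sketch. Your proposed ramification analysis (pairs of lines merging for $\alpha$ but not for $\beta$) may well be true, but it is a harder computation and is not what the paper does.
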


\subsection*{Acknowledgements}
The content of this paper is part of my PhD thesis, I would like to express my gratitude to Christian Lehn for having introduced me to the problem and for his constant support.
I was supported by DFG research grants Le 3093/2-1 and Le 3093/2-2.

\section*{Basic properties of the LLSvS variety}
\subsection*{Construction of the LLSvS variety}
We recollect basic properties of the variety $Z$ from \cite{LLSvS}. Let $Y\subset \P(V)\simeq\P^5$ be a smooth cubic fourfold not containing any plane and let $M$ be the variety of generalised twisted cubics on $Y$, that is, the irreducible component of $\Hilb^{3t+1}(Y)$ containing smooth twisted cubics.
For any curve $C\in M$ its linear span $ E:= \langle C \rangle $ is a $\P^3$ that cuts $Y$ in the cubic surface $S_C:=E\cap Y$. These associations give rise to the diagram
\[\xymatrix{
M \ar[r] \ar[dr]^\sigma & \P (S^3 \mathcal{W}^*)\ar[d] \\
& \Gr(V,4) \ar@/_/[u]_{s}
}
\]
where $\mathcal{W}$ is the universal quotient bundle over the Grassmannian $\Gr(V,4)$ and the section $s$ is given by cutting $Y$ with a $\P^3$.
The morphism $\sigma$ factors through a smooth irreducible variety  $Z'$ of dimension 8:
\[\xymatrix{
M \ar[rd]^\sigma \ar[d]^a\\
Z' \ar[r]^-b & \Gr(V,4)
}
\]
where $a$ is an \' etale locally trivial $\P^2$-fibration and $b$ is generically finite \cite[Theorem~B]{LLSvS}. It is in effect finite over the open set $U_{ADE}\subset Z'$ of surfaces that are either smooth or with ordinary double points. For the complement we have the following estimate \cite[Corollary~3.11, Proposition~4.2, Proposition~4.3]{LLSvS}:
\begin{align} \label{estimate}
\dim (Z' \setminus U_{ADE}) \leq 6.
\end{align}
The irreducible holomorphically symplectic variety $Z$ is obtained by the contraction $\pi:Z'\to Z$ of the irreducible divisor $D_{nCM}\subset Z'$ of families of curves that are not arithmetically Cohen-Macaulay \cite[Theorem~4.11]{LLSvS}.

\subsection*{The divisor of singular cubic surfaces}

The projection $\P (S^3\mathcal{W}^*) \to \Gr(V,4)$ is equivariant for the natural action of $\mathrm{PGL}(6)$. The locus $D_\sing \subset \P (S^3\mathcal{W}^*)$ of all singular cubic surfaces in $\P(V)$ surjects onto the Grassmannian, the fiber over a point $W_0$ is the divisor $D_{\sing, W_0}\subset \P(S^3W_0^*)$ of singular cubic surfaces in $\P(W_0)$.
 Furthermore, $D_{\sing,W_0}$ is irreducible \cite[Theorem~2.2]{huybrechts-cubic-hypersurfaces} and locally stratified depending on the singularity type of the parametrised surfaces, the $A_1$ locus forming an open set in $D_{\sing,W_0}$.
 Since $D_\sing$ coincides with the orbit of $D_{\sing, W_0}$ under the action of $\mathrm{PGL}(6)$ we conclude that $D_\sing$ is an irreducible divisor and the $A_1$-locus is open in it.
Pulling back this divisor along $s\circ b: Z' \to \P (S^3\mathcal{W}^*)$ and taking advantage of both the finiteness of $s\circ b$ on the ADE-locus and the estimate \eqref{estimate} we get the following

\begin{prop}
Let $D' \subset Z'$ be the image under $a$ of the locus of curves lying on singular cubic surfaces. Then $D'$ is a divisor. Moreover, the preimage under $s\circ b$ of singular cubic surfaces of type $A_1$ is a dense open set in $D'$.
\end{prop}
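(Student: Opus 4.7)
The plan is to reinterpret $D'$ as the pullback of a divisor on the Grassmannian and then exploit the estimate \eqref{estimate} to control what happens off the ADE locus. Since $\sigma = b\circ a$ and $a$ is surjective, a curve $C \in M$ lies on a singular cubic surface exactly when $(s\circ b)(a(C)) \in D_\sing$, so I can identify
\[
D' = (s\circ b)^{-1}(D_\sing) = b^{-1}(D_\Gr), \qquad D_\Gr := s^{-1}(D_\sing) \subset \Gr(V,4).
\]

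First I would verify that $D_\Gr$ is a divisor. Since $\P(S^3\mathcal{W}^*)$ is smooth, $D_\sing$ is a Cartier divisor, so $D_\Gr = s^{*}D_\sing$ is at worst an effective Cartier subscheme; it is a proper closed subset of the Grassmannian because by Bertini a general $\P^3 \subset \P(V)$ meets $Y$ in a smooth cubic surface, and it is non-empty since the tangent $\P^3$'s $W = T_y Y$ with $y \in Y$ produce singular sections. Hence $D_\Gr$ has pure codimension one. Applying the same reasoning to $b$, the subscheme $D' = b^{*}D_\Gr$ is locally principal and therefore has pure codimension one by Krull's principal ideal theorem; equivalently, one can argue stratum by stratum, observing that over $U_{ADE}$ the map $b$ is finite, so $b^{-1}(D_\Gr) \cap U_{ADE}$ is a divisor, while the complement $Z' \setminus U_{ADE}$ has dimension at most six by \eqref{estimate} and cannot contribute seven-dimensional components.

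For the second assertion I would let $D_{A_1} \subset D_\sing$ denote the $A_1$-stratum, which is open and dense in the irreducible divisor $D_\sing$. The preimage $(s\circ b)^{-1}(D_{A_1})$ is then open in $D'$ by construction, so only density remains. Since every point of $U_{ADE}$ parametrises a cubic surface that is either smooth or has only nodes, one has the inclusion $D' \cap U_{ADE} \subseteq (s\circ b)^{-1}(D_{A_1})$, and each irreducible component of $D'$ is a seven-dimensional subvariety of $Z'$, so it cannot be contained in $Z' \setminus U_{ADE}$ and therefore must meet the open set $D' \cap U_{ADE}$. This forces $D' \cap U_{ADE}$, and a fortiori $(s\circ b)^{-1}(D_{A_1})$, to be dense in $D'$. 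The only real subtlety is the non-emptiness and properness of $D_\Gr$, which reduce to the classical existence of smooth and of $A_1$-singular $\P^3$-sections of $Y$; everything else is bookkeeping of dimensions via \eqref{estimate}.
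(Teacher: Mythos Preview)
Your argument is exactly the paper's own (terse) justification: pull back $D_\sing$ along $s\circ b$, use that $b$ is finite over $U_{ADE}$, and invoke the estimate \eqref{estimate} to control the complement.

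One small slip in the density step: the inclusion $D'\cap U_{ADE}\subseteq (s\circ b)^{-1}(D_{A_1})$ is not literally true, because $U_{ADE}$ also contains points whose underlying cubic surface has several ordinary double points (and, under the usual meaning of the notation, more general ADE singularities), while the open stratum $D_{A_1}\subset D_\sing$ consists of surfaces with a \emph{single} node. The repair uses the same ingredients you already have. Since $D_{A_1}$ is open and dense in the irreducible divisor $D_\sing$, its complement has codimension at least $2$; correspondingly $D^{\Gr}\setminus s^{-1}(D_{A_1})$ has dimension at most $6$ in $\Gr(V,4)$ (one sees that $D^{\Gr}$ is irreducible, e.g.\ because it is dominated by the $\P^3$-bundle over $Y$ of tangent $3$-planes). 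Finiteness of $b$ over $U_{ADE}$ together with \eqref{estimate} then gives $\dim (s\circ b)^{-1}(D_\sing\setminus D_{A_1})\le 6$, so no seven-dimensional component of $D'$ can lie in it, and $(s\circ b)^{-1}(D_{A_1})$ is dense as claimed.
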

Analogously singular cubic surfaces determine a divisor $D^{\Gr}:= s^{-1}(D_\sing)$ in $\Gr(V,4)$ and a divisor $D:=\pi(D')$ in $Z$.
In light of this proposition, when discussing the irreducible components of $D'$ it will suffice to treat only the $A_1$ locus.

\subsection*{Twisted cubics on $A_1$-singular cubic surfaces}
Whereas twisted cubics on smooth cubic surfaces are a classical subject of study, twisted cubics on surfaces with ordinary double points are well explained in \cite[\S~2.1]{LLSvS}, whose content we briefly recall here and then make explicit in the specific case of $A_1$ singularities. For basic facts on the root lattice $E_6$ in connection with the geometry of the cubic surface we point to \cite[\S 9]{Dolgachev}, \cite{Demazure}, for lines on singular cubic surfaces \cite{bruce-wall}.
 
Given a singular cubic surface $S$ with ordinary double points, its minimal resolution $\tilde{S}$ is a weak del Pezzo surface and the orthogonal complement $K_{\tilde{S}}^\perp \subset \Pic(\tilde{S})$ is a lattice of type $E_6$. 
The exceptional divisor of the resolution $r:\tilde{S}\to S$ consists of $(-2)$-curves, which form a subset of the root system $R:=\{\alpha:\alpha^2 = 0\} \subset K_{\tilde{S}}^\perp$ and generate a subroot system $R_0\subset R$. Let $W(R_0)$ be the Weyl group generated by  reflections of elements in $R_0$, then \cite[Theorem~2.1]{LLSvS} gives a description of the Hilbert scheme with the reduced structure of generalised twisted cubics on $S$:
\begin{align} \label{Hilb(S)}
\Hilb^{\gtc}(S)_{\mathrm{red}} \simeq R/W(R_0) \times \P^2.
\end{align}
For any $\alpha \in R\setminus R_0$ and for any curve $C\in |\alpha - K_{\tilde{S}}|$ the image $r(C)$ is a generalised twisted cubic on $S$. Conversely, the pullback of any aCM-curve on $S$ lies in such a linear system \cite[Proposition~2.2, Proposition~2.5, Proposition~2.6]{LLSvS}. On the other hand, roots in $R_0$ correspond to families of nCM curves.
We now want to make \eqref{Hilb(S)} explicit for $A_1$-singular surfaces.

Let $S\subset \P^3$ be a singular cubic surface of type $A_1$ with singular point $P$, then its minimal resolution $\tilde{S}$ is the blow-up with center $P$, whose exceptional divisor is a $(-2)$-curve $\Gamma$. The linear system $|-\Gamma - K_{\tilde{S}}|$ realises $\tilde{S}$ as a blow-up of $\P^2$ in 6 points $P_1,...,P_6$ lying on a quadric $Q$, which is exactly the image of $\Gamma$. In a picture:
\[\xymatrix{
&\tilde{S}\ar[dl]_\pi\ar[rd]^{r}\\
\P^2 \ar@{-->}[rr]_\rho && S\subset\P^3.
}
\]

Here $\rho$ is the rational map given by the linear system of cubics passing through the 6 points.\\
The Picard group of $\tilde{S}$ is generated by the pullback $H$ of the hyperplane class of $\P^2$ and the 6 exceptional divisors $E_1,...,E_6$. The surface $S$ contains 21 lines: 6 lines are the image $\overline{E}_i$ of the exceptional divisors, they pass through the singular point; moreover, the line trough any two distinct points $P_i,P_j$ is mapped via $\rho$ to the unique line $R_{ij}$ on $S$ intersecting both $\overline{E}_i$ and $\overline{E}_j$ not in $P$.

The canonical bundle $K_{\tilde{S}}$ has class $-3H+E_1+E_2+E_3+E_4+E_5+E_6$, its orthogonal complement $K_{\tilde{S}}^\perp\subset \Pic(\tilde{S})$ is a root lattice of type $E_6$. It has 72 roots:
\begin{align*}
\alpha_{ij} &= E_i - E_j, \textrm{ for $i \not = j$ }\\ 
\pm\beta_{ijk} &= \pm (H-E_i-E_j-E_k),  \textrm{ for $i,j,k$ pairwise distinct};\\
\pm\gamma &= \pm (2H-E_1-E_2-E_3-E_4-E_5-E_6).
\end{align*}
We are going to call the roots $\alpha_{i,j}$ of type $\alpha$, the roots $\pm\beta_{i,j,k}$ of type $\beta$, and the roots $\pm \gamma$ of type $\gamma$. The unique effective root is $\gamma$, the reflection with center $\gamma$ fixes the roots of type $\alpha$, whereas its action on $\beta$-roots is:
\[
\pm\beta_{ijk} \mapsto \mp\beta_{lmn}
\]
with $\{ i,j,k,l,m,n \} = \{ 1,2,3,4,5,6 \}$.
Hence according to \cite[Theorem~2.1]{LLSvS} the Hilbert scheme with the reduced structure is the disjoint union of 51 copies of $\P^2$
\[
\Hilb^{\gtc}(S)_{\mathrm{red}} \simeq \bigsqcup_{\substack{i,j \\  i\not = j}} |\alpha_{ij} - K_{\tilde{S}}|
\ \sqcup \bigsqcup_{\substack{i,j,k \\  \mathrm{pairwise}\\ \mathrm{distinct}}} |\beta_{ijk} - K_{\tilde{S}}|
\ \sqcup \ |-\gamma - K_{\tilde{S}}|.
\]
The curves parametrised by $|-\gamma - K_{\tilde{S}}|$ are not arithmetically Cohen Macaulay and will play no role in the following. For any curve $C$ in $|\alpha_{ij} - K_{\tilde{S}}|$ or in $|\beta_{ijk} - K_{\tilde{S}}|$ the schematic image $r(C)$ is a generalised twisted cubic on $S$.

An ordered pair $(\overline{E}_{i},\overline{E}_{j})$ of distinct lines through the singular point determines the family of generalised twisted cubic on $S$ corresponding to the linear system $|E_i - E_j -  K_{\tilde{S}}|$. In contrast any triple $(\overline{E}_{i},\overline{E}_{j},\overline{E}_{k})$ of pairwise distinct lines through the singular point determines the family of generalised twisted cubic on $S$ corresponding to the linear system $|H - E_i - E_j - E_k -  K_{\tilde{S}}|$.

We recapitulate what so far discussed in the following
\begin{prop} \label{reducibles}
Let $S$ be a singular cubic surface of type $A_1$, let $P$ be the singular point.
\begin{enumerate}[label=(\roman*)]
\item
The surface $S$ has 6 distinguished lines $\overline{E}_1,..,\overline{E}_6$, they are the only ones through the singular point. For each pair $(\overline{E}_{i},\overline{E}_{j})$ there exists a unique line $R_{ij}$ meeting both of them not in $P$.
\item \label{alpha-ij} Each ordered pair $(\overline{E}_{i},\overline{E}_{j})$ determines the family of twisted cubics of type $\alpha$ corresponding to the linear system $|E_i - E_j -  K_{\tilde{S}}|$.
\item Each triple $(\overline{E}_{i},\overline{E}_{j},\overline{E}_{k})$ determines the family of twisted cubics of type $\beta$ corresponding to the linear system $|H - E_i - E_j - E_k -  K_{\tilde{S}}|$.\footnote{Even though $H$ seems to depend on the choice of the resolution it does not because $H = - \Gamma - K_{\tilde{S}}$ where $\Gamma$ is the $(-2)$-curve.}
\end{enumerate}
\end{prop}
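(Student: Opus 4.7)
My plan is to verify the three items by translating statements about $S$ into statements about the minimal resolution $\tilde S$, and then quoting the explicit description of $\Hilb^{\gtc}(S)_{\mathrm{red}}$ recalled in \eqref{Hilb(S)}.

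For (i), I would observe that $r\colon \tilde S \to S$ is an isomorphism away from $\Gamma$, so the lines on $S$ correspond bijectively to the irreducible $(-1)$-curves on $\tilde S$, and such a line passes through $P$ precisely when the corresponding $(-1)$-curve meets $\Gamma$. Since $\tilde S$ is a blow-up of $\P^2$ at six points, the $(-1)$-classes are exhausted by $E_i$, $H-E_i-E_j$ and $2H-\sum_{l\ne i}E_l$. Using $\Gamma = 2H - E_1 - \cdots - E_6$ one computes $E_i\cdot\Gamma = 1$, $(H-E_i-E_j)\cdot\Gamma = 0$, and $2H-\sum_{l\ne i}E_l = \Gamma + E_i$ is reducible. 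The irreducible $(-1)$-curves meeting $\Gamma$ are therefore exactly $E_1,\ldots,E_6$, which yields both the six distinguished lines through $P$ and the fifteen complementary lines $R_{ij}=r(H-E_i-E_j)$. For the uniqueness of $R_{ij}$, a line meeting $\overline E_i, \overline E_j$ outside $P$ lifts to a $(-1)$-curve $L$ with $L\cdot E_i = L\cdot E_j = 1$ and $L\cdot\Gamma = 0$; scanning the three families of classes leaves only $L = H - E_i - E_j$.

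Parts (ii) and (iii) then reduce to a dictionary lookup between configurations of distinguished lines through $P$ and roots of $K_{\tilde S}^\perp$: an ordered pair $(\overline E_i,\overline E_j)$ selects the $\alpha$-root $\alpha_{ij} = E_i - E_j$, while a triple $(\overline E_i,\overline E_j,\overline E_k)$ selects the $\beta$-root $\beta_{ijk} = H - E_i - E_j - E_k$. Both lie in $R\setminus R_0$, so the general principle recalled from \cite[Theorem~2.1]{LLSvS} assigns to them the linear systems $|E_i - E_j - K_{\tilde S}|$ and $|H - E_i - E_j - E_k - K_{\tilde S}|$ respectively. A small consistency check is that for (iii) a triple and its complementary triple $\{l,m,n\}$ should pick out the same family on $S$; this is guaranteed by $\beta_{ijk} + \beta_{lmn} = \Gamma$, so the two linear systems differ by the contracted curve $\Gamma$ and have the same image under $r_*$, in agreement with the Weyl-group identification $\beta_{ijk}\mapsto -\beta_{lmn}$.

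I expect the only non-formal step to be the $(-1)$-curve bookkeeping in (i), in particular the observation that the six classes $2H-\sum_{l\ne i}E_l$ decompose as $\Gamma + E_i$ and hence contribute no new lines on $S$. Once (i) is settled, (ii) and (iii) follow immediately from the explicit identification of roots and the description of $\Hilb^{\gtc}(S)_{\mathrm{red}}$ already in place.
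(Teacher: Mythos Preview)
Your argument is correct and aligns with the paper's treatment: the proposition there is explicitly introduced as a recapitulation of the preceding discussion, so there is no separate proof beyond the description of the $21$ lines, the list of roots, and the identification \eqref{Hilb(S)}. Your write-up is in fact a bit more explicit than the paper's discussion---the check that the classes $2H-\sum_{l\ne i}E_l=\Gamma+E_i$ are reducible (hence contribute no new lines), the uniqueness argument for $R_{ij}$, and the complementary-triple consistency check are details the paper leaves implicit or defers to the references on singular cubic surfaces.
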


The general element of the divisor $D'$ is a family of generalised twisted cubic lying on a singular surface of type $A_1$ and can be of type $\alpha, \beta$ or $\gamma$.
\begin{defi}\label{def-divisors}
 We define $D'_\alpha$, respectively $D'_\beta$ and $D'_\gamma$, as the closure of the sets in $Z'$ of families of generalised twisted cubics lying on $A_1$-singular cubic surfaces of type $\alpha$, respectively $\beta$ and $\gamma$.
 We call $D_\alpha$, respectively $D_\beta$, the image $\pi(D'_\alpha)$ in $Z$, respectively $\pi(D'_\beta)$.
\end{defi}
\begin{prop}
The closed set $D'_\gamma$ is an irreducible divisor in $Z'$.
\end{prop}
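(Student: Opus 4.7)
The plan is to identify $D'_\gamma$ with the irreducible divisor $D_{nCM} \subset Z'$ of non-Cohen-Macaulay families, which is already known to be an irreducible divisor by \cite[Theorem~4.11]{LLSvS}.

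First I would establish the inclusion $D'_\gamma \subseteq D_{nCM}$. In the $A_1$ case the exceptional $(-2)$-curve $\Gamma$ of the resolution $r:\tilde S \to S$ has class $-H - K_{\tilde S} = 2H - E_1 - \ldots - E_6 = \gamma$, as already observed in the footnote of the excerpt. Hence $R_0 = \{\pm \gamma\}$, and by the sentence following \eqref{Hilb(S)} every root in $R_0$ parametrises a family of non-aCM curves; consequently every type-$\gamma$ family lies in $D_{nCM}$.

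Next I would show $\dim D'_\gamma = 7$, matching $\dim D_{nCM}$. By \eqref{Hilb(S)} together with the description of the action of $W(R_0)$, each $A_1$-singular cubic surface $[W]$ in the dense open locus $U^{\Gr} \subseteq D^{\Gr}$ carries exactly one $\gamma$-component $|-\gamma - K_{\tilde S}| \simeq \P^2$. The non-aCM locus is closed in the relative Hilbert scheme $\sigma : M \to \Gr(V,4)$ and, over $U^{\Gr}$, is the union of these $\gamma$-components; composing its inclusion with the $\P^2$-fibration $a$ collapses each such $\P^2$ to a single point of $Z'$ and yields an algebraic morphism $U^{\Gr} \to Z'$ which is a section of $b$ over $U^{\Gr}$ (since $b$ is finite on the ADE locus). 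Its image is irreducible of dimension $7$, and by Definition \ref{def-divisors} its closure is precisely $D'_\gamma$, so $\dim D'_\gamma = 7$.

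Combining the two steps, $D'_\gamma$ is a $7$-dimensional closed subset of the irreducible $7$-dimensional variety $D_{nCM}$, so $D'_\gamma = D_{nCM}$, completing the proof. The main point requiring care is the algebraicity of the section $U^{\Gr} \to Z'$ in the dimension step, i.e.\ that the $\gamma$-components glue into an honest algebraic subfamily rather than just a set-theoretic collection of $\P^2$'s; this is guaranteed by the closedness of the non-aCM locus in $\sigma$, which singles out these components globally over $U^{\Gr}$.
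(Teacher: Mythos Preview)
Your proposal is correct and follows essentially the same approach as the paper: both identify $D'_\gamma$ with the nCM divisor $D_{nCM}$ and invoke its known irreducibility from \cite{LLSvS}. The paper's proof is a single sentence asserting this coincidence (citing \cite[Proposition~4.5]{LLSvS} rather than Theorem~4.11), whereas you spell out the inclusion $D'_\gamma \subseteq D_{nCM}$ and the dimension count that forces equality; your added detail is sound but not strictly necessary given what the paper has already recorded about $D_{nCM}$.
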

\begin{proof}
The set $D'_\gamma$ coincides with the divisor of nCM generalised twisted cubics, which is irreducible \cite[Proposition~4.5]{LLSvS}.
\end{proof}
In the next sections we show the irreducibility of $D'_\alpha$, $D_\alpha$ (Corollary \ref{Dalpha-irr}) and of $D'_\beta$, $D_\beta$ (Corollary \ref{Dbeta-irr}).

\section*{The irreducible components of $D$}
\subsection*{The irreducible component $D_\alpha$}
Let $Y$ be a smooth cubic fourfold not containing a plane and let $F$ be its Fano variety of lines, which is an irreducible holomorphically symplectic variety of dimension 4.
Voisin \cite[Proposition~4.8]{Voisin} constructed a degree 6 rational map
\[
\varphi \colon F\x F \dashrightarrow Z.
\]
The construction goes as follows. Let $(l,l')$ be a general point in $F\x F$. It corresponds to a pair of non-coplanar lines $L,L'$. After having chosen a point $x$ on $L$, one takes the residual conic $Q_x$ to $L'$ of the intersection $Y \cap \langle x,L' \rangle$. The union of $Q_x$ and $L$ determines then the class of a generalised twisted cubic on the cubic surface $S:= \langle L, L' \rangle \cap Y$.
In other words, the lines $L$ and $L'$ determine the family $|\O_S (-K_{S}+ L - L')|$ of generalised twisted cubics on $S$.

The indeterminacy locus of $\varphi$ coincides with the variety $I$ of incident lines, which is irreducible of dimension 6, \cite[Theorem~1.2, Lemma~2.3]{Muratore}. The branch divisor of a resolution of $\varphi$ is a uniruled divisor, as remarked in \cite[Remark~4.10]{Voisin}, for details see \cite[Lemma~4.4]{Muratore}.
We consider the rational maps
\begin{align*}
\phi':= \pi^{-1} \circ \phi \colon & F \x F \dashrightarrow Z'   \\
\psi:= b \circ \pi^{-1} \circ \phi \colon & F \x F \dashrightarrow \Gr
\end{align*}
where $\pi: Z' \to Z$ is the morphism discussed above and $\pi^{-1}$ is its rational inverse map. Here $\Gr:=\Gr(V,4)$ parametrises 3-dimensional projective spaces in $\P^5 = \P(V)$.
As natural resolution of the indeterminacy locus of $\psi$ we consider the closure $\Gamma$ of its graph with projections $p\colon \Gamma \to F\x F$, $q\colon \Gamma\to \Gr$. We study its points by taking flat limits along curves $\lambda$ on $F\x F$ through points where the rational map is not defined.

\begin{lem}
About the fiber of the graph over $I$ we have:
\[
q(p^{-1}( I )) = D^{\Gr}.
\]
Moreover, for the general point $(l,l')$ in $I$ we have:
\[
p^{-1}(l,l') = \{ E \in \Gr: \langle L,L' \rangle \subset E \subset T_y Y \} \simeq \P^1
\]
 where $y$ is the unique intersection point of $L$ and $L'$.
\end{lem}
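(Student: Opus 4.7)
The plan is to analyse the fibre of $p\colon \Gamma\to F\x F$ over $I$ by taking flat limits along one-parameter families $(l_t,l'_t)\to (l,l')$ with $(l_t,l'_t)\notin I$ for $t\neq 0$. Over $(F\x F)\setminus I$ the map $\psi$ sends $(l,l')$ to the $\P^3$ $\langle L,L'\rangle$ spanned by the two disjoint lines, so the heart of the argument is the computation of the limit in $\Gr$ of $E_t := \langle L_t,L'_t\rangle$ as $L_t,L'_t$ specialise to a pair meeting at $y\in Y$.

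First I would work in an affine chart centred at $y$ and write $L_t$ through a base point $y_t\to y$ with direction $v_t\to v := T_y L$, and analogously $L'_t$ through $y'_t\to y$ with direction $v'_t\to v'$; then $E_t$ is the projective span of $y_t,\,v_t,\,y'_t-y_t,\,v'_t$. The first two and the last already tend to vectors in $T_y Y$, while the third degenerates to $0$. Rescaling this vector by its order of vanishing, its projective limit is some direction $w$, and the identities $f(y_t)=f(y'_t)=0$ for the defining cubic $f$, together with a first order expansion of $f$ at $y$, force $df_y(w)=0$, i.e.\ $w\in T_y Y$. Hence
\[
E_0 = \langle L,\,L',\,w\rangle \subset T_y Y .
\]

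Two consequences follow immediately. Since $y\in E_0\subset T_y Y$, the Zariski tangent space of $E_0\cap Y$ at $y$ contains $E_0\cap T_y Y = E_0$, so $E_0\cap Y$ is singular at $y$; therefore $E_0\in D^{\Gr}$, which gives the inclusion $q(p^{-1}(I))\subset D^{\Gr}$ and shows that every fibre $p^{-1}(l,l')$ is contained in the $\P^1$ of $\P^3$'s sandwiched between $\langle L,L'\rangle$ and $T_y Y$. For the reverse inclusions I would reverse the construction: a general $E\in D^{\Gr}$ produces an $A_1$-singular $S = E\cap Y$ at some $y$, so $E\subset T_y Y$, and by Proposition \ref{reducibles} the surface $S$ carries six distinguished lines through $y$; any two of them yield $(l,l')\in I$ with $\langle L,L'\rangle\subset E\subset T_y Y$. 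To realise such an $E$ as a limit $E_0$, I would keep $L_t = L$ and deform $L'$ in $F$ so that its base point $y'_t$ moves on $Y$ towards $y$ along the tangent direction $w$ with $E = \langle L,L',w\rangle$; the lines on $Y$ through a varying base point form a $1$-dimensional family, providing enough freedom to prescribe the first-order direction of approach in the $2$-dimensional quotient $T_y Y/\langle L,L'\rangle$.

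The main obstacle is this last deformation step: checking that every $w\in T_y Y/\langle L,L'\rangle$ is genuinely realised by an algebraic one-parameter family of lines on $Y$ through a moving point. Once this is granted, irreducibility of $\Gamma$ (closure of the graph of a rational map from the irreducible $F\x F$) together with the dimension count $\dim I = 6$, $\dim D^{\Gr} = 7$, $\dim \Gamma = 8$ forces both equalities $q(p^{-1}(l,l')) = D^{\Gr}$ and $p^{-1}(l,l')\simeq \P^1$ at once.
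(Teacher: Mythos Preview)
Your approach and the paper's share the same skeleton: analyse limits of $E_t=\langle L_t,L'_t\rangle$ along curves $\lambda$ through a point of $I$, show that every limit lands in the $\P^1$ of $\P^3$'s between $\langle L,L'\rangle$ and $T_yY$, and then produce enough deformations to hit every point of that $\P^1$. The genuine difference is in the first step. You argue $E_0\subset T_yY$ by a direct first-order computation in affine coordinates, deducing $df_y(w)=0$ from $f(y_t)=f(y'_t)=0$. The paper instead first proves only that $S_0=E_0\cap Y$ is singular \emph{somewhere}, via a monodromy argument (were $S_0$ smooth, the Picard groups $\Pic(S_t)$ would form a local system and $L_t\cdot L'_t$ would be constant, contradicting $0\neq 1$), and then runs a second argument using a simultaneous resolution of the family $\mathscr S\to\lambda$ to pin the singularity at $y$. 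Your route is shorter and more elementary; the paper's is more robust in that it does not rely on the first-order term $w$ being transverse to $\langle L,L'\rangle$. When $w$ falls back into that plane one must pass to the next order, and the tangency $df_y(\,\cdot\,)=0$ is no longer automatic for the second-order direction (a term $d^2f_y(w,w)$ appears), so your argument as written covers only generic $\lambda$.

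For the deformation step---what you correctly flag as the main obstacle---the paper supplies exactly the missing tool: for a general line $L$ on $Y$ one has $N_{L|Y}\cong\O_L^{\oplus2}\oplus\O_L(1)$ (a line ``of type I''), so the evaluation map $H^0(L,N_{L|Y})\to N_{L|Y}(y)$ is surjective, and any prescribed first-order displacement of $y$ inside $T_yY$ modulo $T_yL$ is realised by an actual deformation of $L$ in $F$. This is precisely how the paper manufactures a curve $\lambda$ with given limit $E$, proving surjectivity onto the $\P^1$ and onto $D^{\Gr}$.

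One small slip: in your last sentence ``$q(p^{-1}(l,l'))=D^{\Gr}$'' should read $q(p^{-1}(I))=D^{\Gr}$; the image of a single fibre is at most a curve.
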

\begin{proof}
Let $i = (l,l')$ be a general point in $I$ and $\lambda\subset F\x F$ a smooth curve in an affine open intersecting $I$ exactly in $i$.
By the properness of the Grassmannian we get a morphism $\lambda \to \Gr$ and the curve parametrises a family $f\colon\mathscr{S}\to \lambda$ of cubic surfaces  contained in $Y$.
 Since singular cubic surfaces determine a divisor in the Grassmannian, we may assume that $\mathscr{S}_t$ is smooth for every $i\not =t \in \lambda$. By \cite[III, Theorem 10.2]{HAG} the smoothness of the morphism $f$ is equivalent to the smoothness of the surface $\mathscr{S}_i$.

We claim that the surface $\mathscr{S}_i$ is not smooth. Suppose to the contrary that the family were smooth, then the Picard groups $\Pic(\mathscr{S}_t) = H^2(\mathscr{S}_t, \bZ)$ would glue together in the local system $R^2 f_*\bZ$ on $\lambda$. Every point $t\in \gamma$ parametrises a pair of lines $(l_t,l'_t)$, which are disjoint for $t\not=i$. Taking their intersection product in $\Pic(\mathscr{S}_t)$ we then get
\[
0 = L_t\cdot L_t' = L_i \cdot L'_i = 1.
\] 
Hence, we conclude that $\mathscr{S}_i$ is singular.

 This shows the factorisation $p^{-1}(I) \to D^{\Gr} \subset \Gr$. Choosing $\lambda$ accurately, one proves that the morphism is dominant onto $D^{\Gr}$ and thus surjective.

Indeed, let $E\in D^{\Gr}$ be a $\P^3$ that contains two distinct incident lines $L$ and $L'$ meeting in a point $y$, in which the cubic surface $E\cap Y$ is singular. We consider the following diagram involving the tangent space $T_y Y$ of $Y$ at $y$, the normal bundle $N_{L|Y}$ of $L$ in $Y$ and its stalk $N_{L|Y}(y)$ at the point $y$ with the natural maps:
\[\xymatrix{
& T_yY\ar[d]\\
H^0(L,N_{L|Y})\ar[r]_{ev} & N_{L|Y}(y).
}
\]
The general line $L$ is of type I \cite[Definition~6.6]{griffiths}, that is, $N_{L|Y} \cong \O_{L}^{\oplus 2} \oplus \O_L(1)$, thus we may assume that the evaluation map $ev$ is surjective. 
Let $e$ be a vector in $T_y E$ not contained in $T_y \langle L, L' \rangle$. The image of $e$ under $T_y E \subset T_y Y \to N_{L|Y}(y)$ lifts to a vector $\bar{e} \in H^0(L,N_{L|Y})$, which corresponds to a deformation of $L$ and is represented by a curve $\lambda'$ in $F$ through $l$. If we set $\lambda = \lambda' \x \{l'\} \subset F\x F$, then the limit $\P^3$ computed along $\lambda$ coincides with $E$.

The first assertion is now proven.\bigskip

 The limit surface $\mathscr{S}_i$ is in effect singular in the intersection point $y$ of $L_i$ and $L_i'$. Indeed, we may assume $\mathscr{S}_i$ has one $A_1$-singularity in the point $P$. In virtue of \cite[\S 2]{riemenschneider}, after restricting to an analytic neighbourhood of $i$, there is a diagram
 \[\xymatrix{
T \ar[r]^r \ar[d] &\mathscr{S} \ar[d] \\
(\lambda',i') \ar[r]^f & (\lambda, i)
}
\]
where $f\colon(\lambda',i') \to (\lambda,i)$ is a finite Galois cover mapping $i'$ to $i$ and where $T\to \lambda'$ is a family of smooth surfaces such that $T_{t}\to \mathscr{S}_{f(t)}$ is an isomorphism for any $i'\not = t \in \lambda'$ and $T_{i'}\to \mathscr{S}_{i}$ is the minimal resolution of $\mathscr{S}_i$.
The surfaces $T_t$ are isomorphic to blow-ups of $\P^2$ in $6$ points, which are in general position for any $t\not =i'$, hence the groups $\Pic(T_t)$ form a local system over the all $\lambda'$. After further shrinking $\lambda'$ to a contractible neighbourhood of $i'$ the latter becomes trivial. 
The lines $L_{f(t)}\subset \mathscr{S}_{f(t)} \simeq T_t$ form a flat family over $\lambda \setminus \{i'\}$. Taking its closure we find a curve $X$ in $T_{i'}$, which completes the family to a flat family over all $\lambda'$ and corresponds to a section of the local system of Picard groups. The curve $X$ can be either the strict transform $\tilde{L}$ of the line $L$ or the union of $\tilde{L}$ and the $(-2)$-curve $\Gamma$, which arises as exceptional divisor of the resolution of $\mathscr{S}_i$.
Analogously, for the lines $L'_t$ we find the curve $X'$ in $T_{i'}$, which can be either $\tilde{L}'$ or the union of $\tilde{L}'$ and the curve $\Gamma$.
Since the intersection product of $X$ and $X'$ in $\Pic(T_{i'})$ must coincide with $L_{f(t)}\cdot L'_{f(t)} = 0$ for any $t \not = i'$, the only possibility is that $P$ lies on both the lines $L$ and $L'$.
\end{proof}

Let $\lambda$ be a curve as the one in the proof above, since $Z'$ is proper the Voisin map extends to a well-defined morphism
\[
\phi'_\lambda\colon\lambda \to Z'.
\]
We are interested in the image of $i$, which is represented by a family of generalised twisted cubic. By the previous lemma we know that any such curve lies on a singular surface.

\begin{lem}
For the general point $i\in I$ and the general curve $\lambda$ the limit twisted cubic $\phi'_\lambda(i)$ is of type $\alpha$.
\end{lem}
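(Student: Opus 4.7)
The plan is to track the flat limit of the Voisin linear system through the specialisation constructed in the previous lemma. By that lemma, for general $i \in I$ and general $\lambda$ the limit surface $\mathscr{S}_i$ has a single $A_1$ singularity at $y = L \cap L'$, and both $L = L_i$ and $L' = L'_i$ are distinguished lines through $P = y$. So $L = \overline{E}_{i_0}$ and $L' = \overline{E}_{j_0}$ for two indices $i_0 \neq j_0$ in the notation of Proposition \ref{reducibles}(i), and on the simultaneous resolution $T \to \lambda'$ produced in that proof the flat limits $X, X' \in \Pic(\tilde{S})$ of the classes $[L_t], [L'_t]$ satisfy $X \in \{E_{i_0}, E_{i_0} + \Gamma\}$ and $X' \in \{E_{j_0}, E_{j_0} + \Gamma\}$.

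The next step is to identify the limit Voisin class on $\tilde{S}$. The Voisin family $|-K_{\mathscr{S}_{f(t)}} + L_{f(t)} - L'_{f(t)}|$ on a smooth fibre is a $\P^2$-family of generalised twisted cubics. Since ADE resolutions are crepant, the relative anticanonical class of $T \to \lambda'$ restricts to $-K_{\tilde{S}}$ on $T_{i'} = \tilde{S}$, so the flat limit divisor class on $\tilde{S}$ is $-K_{\tilde{S}} + X - X'$.

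The decisive input is the intersection $L_t \cdot L'_t = 0$ (the two lines are disjoint on the smooth $\mathscr{S}_{f(t)}$), which is preserved under flat specialisation; hence $X \cdot X' = 0$. Using $E_{i_0} \cdot E_{j_0} = 0$, $E_{i_0} \cdot \Gamma = E_{j_0} \cdot \Gamma = 1$ and $\Gamma^2 = -2$, a short calculation rules out the mixed options and leaves the two cases $(X, X') \in \{(E_{i_0}, E_{j_0}), (E_{i_0} + \Gamma, E_{j_0} + \Gamma)\}$; in both one finds $X - X' = E_{i_0} - E_{j_0} = \alpha_{i_0 j_0}$. Hence the limit linear system is $|E_{i_0} - E_{j_0} - K_{\tilde{S}}|$, which by Proposition \ref{reducibles}(ii) is a family of type $\alpha$.

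The main subtlety I anticipate is to confirm that the whole $\P^2$-family specialises to the $\P^2$ I have identified as a point of $Z'$, and not merely its divisor class. This is handled by upper semicontinuity of $h^0$ together with a routine dimension count showing $|E_{i_0} - E_{j_0} - K_{\tilde{S}}|$ is itself a $\P^2$ (plane cubics with a node at $P_{j_0}$, passing simply through the four remaining $P_k$ with $k \neq i_0, j_0$, and unrestricted at $P_{i_0}$), matching the dimension of the Voisin family on the general fibre.
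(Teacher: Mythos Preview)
Your argument is correct and follows essentially the same route as the paper: pass to the simultaneous resolution $T\to\lambda'$ constructed in the previous lemma, track the Voisin class $-K_{S_t}+L_t-L'_t$ through the specialisation, and identify the limit linear system on $\tilde S$ as $|{-K_{\tilde S}}+E_{i_0}-E_{j_0}|$, hence of type $\alpha$.

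Where you differ is in the level of care. The paper simply asserts that the limit linear system equals $|\O_{\tilde S}(-K_{\tilde S}+\tilde L-\tilde L')|$. You make this step precise: you observe that the honest flat limit of the class is $-K_{\tilde S}+X-X'$ with $X\in\{E_{i_0},E_{i_0}+\Gamma\}$ and $X'\in\{E_{j_0},E_{j_0}+\Gamma\}$, and then use the constraint $X\cdot X'=0$ (already obtained in the previous lemma) to reduce to the two cases in which $X-X'=E_{i_0}-E_{j_0}$. This is a genuine clarification, since the paper does not explain why the ambiguity in $X$ and $X'$ does not affect the limit class. Your final remark on upper semicontinuity of $h^0$ and the dimension count for $|E_{i_0}-E_{j_0}-K_{\tilde S}|$ is also a point the paper leaves implicit. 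So: same strategy, but your write-up fills in the missing justification.
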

\begin{proof}
We may assume that the limit family of twisted cubics lies on a singular surface $S_i$ of type $A_1$ with one singularity at the point of intersection of $L,L'$.
The point $\phi'_\lambda (i)$ is represented by a family of generalised twisted cubics on $S_i$, which in turn corresponds to a linear system $A$ on the minimal resolution $\tilde{S}$ of $S_i$. In contrast, for any other point $t\not = i$ in $\lambda$ the image $\phi'_\lambda(t)$ consists of the family of curves in $|\O_{S_t} (-K_{S_t}+L_t - L_t')|$. 
 After passing to a Galois cover of an analytic neighbourhood of $i$ in $\lambda$ as before, we see that the linear system $A$ is equal to $|\O_{\tilde{S}} (-K_{\tilde{S}}+ \tilde{L} -\tilde{L}')|$,  where $\tilde{L}$ and $\tilde{L'}$ are the strict transforms of the two lines $L$ and $L'$.
Thus the limit family $\phi'_\lambda(i)$ is of type $\alpha$.
\end{proof}

In terms of the geometry of $Z$ we have thus proven the following.
\begin{cor}\label{Dalpha-irr}
The closed set $D'_\alpha$ is an irreducible uniruled divisor in $Z'$. Its image $D_\alpha$ in $Z$ coincides with the branch locus of a resolution of the Voisin map.
\end{cor}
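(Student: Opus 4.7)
The strategy is to identify $D'_\alpha$ with the image, in $Z'$, of the exceptional divisor $p^{-1}(I)\subset\Gamma$ under an extension of the Voisin map. Irreducibility, the dimension, uniruledness and the comparison with the branch divisor will then all fall out of this identification combined with the two lemmas just proved.

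The first step is to extend the rational map $\phi'=\pi^{-1}\circ\phi\colon F\x F\dashrightarrow Z'$ to a morphism $\tilde\phi'\colon\Gamma\to Z'$, possibly after replacing $\Gamma$ by a further blow-up; this is justified by the properness of $Z'$ and is exactly the construction used curve-by-curve in the second lemma. Such a modification does not alter codimension-one behaviour, so I continue to write $p$ and $p^{-1}(I)$. By the second lemma, $\tilde\phi'$ lands in the type-$\alpha$ locus on a dense open subset of $p^{-1}(I)$, whence $\tilde\phi'(p^{-1}(I))\subseteq D'_\alpha$. For the reverse inclusion I use Proposition~\ref{reducibles}: a type-$\alpha$ family on an $A_1$-singular cubic surface $S\subset Y$ is determined by an ordered pair $(\overline E_i,\overline E_j)$ of lines through the singular point of $S$, and these two lines meet at the singularity. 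Hence they form a point of $I$, and running the degeneration of the second lemma in the opposite direction along a suitable curve in $F\x F$ realises the given family as a Voisin limit. This yields $D'_\alpha=\tilde\phi'(p^{-1}(I))$.

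By the first lemma, $p^{-1}(I)$ is generically a $\P^1$-bundle over the irreducible 6-dimensional variety $I$, so it admits a unique 7-dimensional irreducible component that dominates $D'_\alpha$. Therefore $D'_\alpha$ is irreducible, and since it contains a nonempty open subset of the 7-dimensional divisor $D'$, it is itself a divisor in $Z'$; the same irreducibility descends via $\pi$ to $D_\alpha\subseteq Z$. For uniruledness, the general fibre $p^{-1}(l,l')\simeq\P^1$ maps non-constantly into $Z'$---as $E$ varies in the pencil $\langle L,L'\rangle\subseteq E\subseteq T_yY$, both the cubic surface $E\cap Y$ and the resulting twisted cubic move---so $D'_\alpha$ and $D_\alpha$ are covered by rational curves.

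Finally, $\pi\circ\tilde\phi'\colon\Gamma\to Z$ is a resolution of $\phi$ whose $p$-exceptional locus is exactly $p^{-1}(I)$. A standard canonical-class computation, using the triviality of $K_{F\x F}$ and $K_Z$, shows that the ramification divisor of $\pi\circ\tilde\phi'$ is supported on $p^{-1}(I)$, so the branch divisor is its image $D_\alpha$. The main obstacle I expect is the reverse inclusion $D'_\alpha\subseteq\tilde\phi'(p^{-1}(I))$ above: verifying that \emph{every} type-$\alpha$ family, and not merely the generic one, arises as a Voisin limit from a pair of incident lines requires running the flat-limit computation of the second lemma in both directions, and is the only place where the detailed classification of Proposition~\ref{reducibles} is genuinely used.
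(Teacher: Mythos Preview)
Your proposal is correct and follows the same line as the paper, which presents the corollary as an immediate consequence of the two preceding lemmas without a separate proof. You have simply spelled out the implicit argument: the (main component of the) exceptional locus $p^{-1}(I)$ is irreducible by the first lemma, maps into and onto the type-$\alpha$ locus by the second lemma together with Proposition~\ref{reducibles}\ref{alpha-ij}, and this identifies $D'_\alpha$ with the image of the exceptional divisor of a resolution of $\phi$. The only cosmetic difference is that the paper obtains uniruledness and the branch-divisor statement by citing \cite[Remark~4.10]{Voisin} and \cite[Lemma~4.4]{Muratore}, whereas you argue both directly---uniruledness via the non-constant images of the $\P^1$-fibres (which is immediate since $b\circ\tilde\phi'=q$ embeds each fibre in $\Gr$), and the ramification via $K_{F\times F}=K_Z=0$; this is exactly the argument in the cited references. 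Your self-identified ``main obstacle'' is not really one: since $D'_\alpha$ is by definition the closure of the type-$\alpha$ locus, it suffices to hit a dense open subset, and the explicit formula $|\O_{\tilde S}(-K_{\tilde S}+\tilde L-\tilde L')|$ from the proof of the second lemma, read together with Proposition~\ref{reducibles}\ref{alpha-ij}, shows that every type-$\alpha$ family on a general $A_1$-surface arises from the pair of lines $(\overline E_i,\overline E_j)\in I$.
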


\subsection*{The irreducible component $D_\beta$}
We consider the variety of triples of lines with non-trivial common intersection:
\[
I_3:= \{ (l_1,l_2,l_3) \in F\x F\x F: L_1\cap L_2\cap L_3 \not = \emptyset \}.
\]

\begin{lem}
The variety $I_3$ is irreducible of dimension $7$. 
\end{lem}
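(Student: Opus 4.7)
The plan is to parametrise $I_3$ by the common intersection point, in the spirit of the classical treatment of the incidence variety $I$ of pairs of meeting lines. Let $P\subset Y\x F$ denote the universal line with its two projections $P\to Y$ and $P\to F$; the latter is a $\P^1$-bundle, so $P$ is irreducible of dimension $5$. Set
\[
\tilde I_3 := P\x_Y P \x_Y P = \{(y, l_1, l_2, l_3) \in Y\x F\x F\x F : y \in L_1 \cap L_2 \cap L_3\},
\]
with projections $q\colon \tilde I_3 \to Y$ and $p\colon \tilde I_3 \to F\x F\x F$; by construction $p(\tilde I_3) = I_3$.

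The first step is to analyse the fibres of $q$. We have $q^{-1}(y) = F_y^3$, where $F_y \subset F$ is the scheme of lines of $Y$ through $y$, i.e.\ the fibre of $P\to Y$ over $y$. Expanding the cubic defining $Y$ as a Taylor series about $y$ identifies $F_y$ with the complete intersection of a quadric and a cubic inside the projective tangent space $\P(T_y Y)\simeq \P^3$; for a general $y$ this is a smooth, connected, hence irreducible, curve of degree $6$ and genus $4$. Thus there is a dense open $U\subset Y$ over which $q$ is flat with irreducible $3$-dimensional fibres, which forces $q^{-1}(U)$, and hence its closure $\tilde I_3$, to be irreducible of dimension $4+3 = 7$.

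The second step is to transfer irreducibility from $\tilde I_3$ to $I_3$ via $p$. The projection $p$ is generically one-to-one, because three pairwise distinct lines in $\P^5$ share at most one point, so for a general triple in $I_3$ the common intersection point is uniquely determined. Therefore $I_3 = p(\tilde I_3)$ is irreducible of dimension $7$.

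The only non-formal input is the irreducibility (indeed the smoothness) of the generic fibre of $P\to Y$. This is classical for a smooth cubic fourfold; if needed it may be extracted by applying Bertini to the complete intersection description of $F_y$ together with the cohomological computation $h^0(\O_{F_y}) = 1$ for a $(2,3)$-complete intersection in $\P^3$, ensuring both smoothness and connectedness at the general point.
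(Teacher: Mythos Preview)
Your $\tilde I_3$ is exactly the paper's auxiliary variety $J=p^{-1}(\Delta)$, and fibering it over $Y$ rather than fibering $I_3$ over the pair--incidence variety $I$ (as the paper does via $p_{12}$) is a perfectly good alternative route. There is, however, a real gap: you assert that the closure of $q^{-1}(U)$ is all of $\tilde I_3$, but nothing you have written rules out further irreducible components of $\tilde I_3$ supported entirely over $Y\setminus U$. Knowing that the generic fibre $F_y^3$ is irreducible of dimension $3$ gives only the irreducibility of $q^{-1}(U)$ itself.

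Two further ingredients are needed, and the paper supplies both. First, every irreducible component of $\tilde I_3 = P\x_Y P\x_Y P$ has dimension at least $7$: the small diagonal $Y\hookrightarrow Y^3$ is locally cut by $8$ equations, hence $\tilde I_3$ is locally cut by $8$ equations inside the smooth $15$-fold $P\x P\x P$. This is precisely the lower bound the paper establishes for $J$ and is the step doing the real work. Second, one must control the special fibres of $q$: the scheme $F_y$ of lines through $y$ is a curve for all but finitely many $y\in Y$ and at most a surface at those exceptional points (the fact the paper records for $C_y$), so $\dim q^{-1}(Y\setminus U)\le 6$. Together these force any hypothetical extra component of $\tilde I_3$ to have dimension simultaneously $\le 6$ and $\ge 7$, a contradiction. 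With this addition your argument is complete and, if anything, more direct than the paper's detour through $p_{12}\colon I_3\to I$.
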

\begin{proof}
Let $\bL \subset F\x Y$ be the universal family of lines on $Y$ parametrised by $F$, its threefold product fits in the diagram
\[\xymatrix{
\bL \x \bL \x \bL \ar[r]^p\ar[d]^q & Y\x Y\x Y \\
F\x F\x F.
}
\]
where $p$ and $q$ denote the natural projections.
The variety $I_3$ is the image via $q$ of $J:=p^{-1} (\Delta)$, where $Y \simeq \Delta \subset Y\x Y\x Y$ is the diagonal embedding. Since $J$ is locally cut out by $8$ equations, all its irreducible components have dimension greater than or equal to $7$. The restriction of $q$ to $J$ is birational and just contracts the large diagonal 
\[
\{(l_1,l_2,l_3)\in I_3: l_{i_1} = l_{i_2} \textrm{ for some } i_1\not = i_2\}
\]
which has dimension 6. Thus all irreducible components of $I_3$ have dimension at least $7$.
Via the projection $F\x F\x F \to F\x F$ onto the first two factors  the variety $I_3$ is fibred over the irreducible variety $I$ of dimension $6$ \cite[Lemma~2.3]{Muratore}: 
\[
p_{12}: I_3 \to I.
\]
We study its fibres.
\begin{itemize}
\item if $(l,l')$ lies on the diagonal of $F\x F$, that is $L = L'$, then its preimage is the variety $F_L$ of lines intersecting $L$. 
\item In contrast, if $(l,l')\in I$ is a point such that $L\cap L' = \{ y \}$ then the fibre $p^{-1}_{12}(l,l')$ is the variety $C_y$ of lines through $y$.
\end{itemize}
The variety $C_y$ of lines through a given point $y$ is a curve, which is in general irreducible, except for finitely many points in $Y$ for which is a surface \cite[Proposition~2.4]{starr}. The variety $F_L$ admits a rational map to $L$ well-defined away from $l\in F_L$:
\[
F_L \dashrightarrow L,\ r \mapsto R \cap L.
\]
The fibre over a point $y\in L$ is the variety $C_y$, thus $F_L$ is a surface. It follows that $I_3$ is irreducible.
\end{proof}

A general triple $(l_1, l_2, l_3)$ in $I_3$ spans a $\P^3$ which intersects the cubic fourfold $Y$ in a singular cubic surface of type $A_1$: the singular point being the unique common intersection point of the three lines. According to our discussion in the previous section, this data determines the class of a generalised twisted cubic of type $\beta$ (cf. Proposition \ref{reducibles}). We have therefore constructed a rational map
\[
\rho \colon I_3 \dashrightarrow  Z'.
\]
which is dominant onto $D'_\beta$.
As immediate consequence we get\
\begin{cor}\label{Dbeta-irr}
The closed set $D'_\beta$ in $Z'$ as well as its image $D_\beta$ in $Z$ is an irreducible divisor.
\end{cor}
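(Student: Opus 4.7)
The plan is to leverage the dominant rational map $\rho \colon I_3 \dashrightarrow Z'$ just constructed, whose image is $D'_\beta$ by design. Since $I_3$ is irreducible of dimension $7$ by the preceding lemma, $D'_\beta$ is irreducible as the closure of the image of an irreducible variety, and the same argument applied to $\pi \circ \rho$ gives the irreducibility of $D_\beta = \pi(D'_\beta)$. So irreducibility is immediate, and the content lies in verifying that both sets are $7$-dimensional.

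To see that $D'_\beta$ is a divisor in the $8$-dimensional $Z'$, I would verify that $\rho$ is generically finite. A point $z \in Z'$ determines its cubic surface $S = s(b(z))$, and a general point of $D'_\beta$ lies over an $A_1$-singular such $S$. By Proposition \ref{reducibles}(iii) the $\beta$-family at $z$ picks out a single unordered triple $\{i,j,k\}$ among the six lines $\overline{E}_1,\ldots,\overline{E}_6$ through the singular point, so any triple in $\rho^{-1}(z)$ is an ordering of $(\overline{E}_i,\overline{E}_j,\overline{E}_k)$. The fibre therefore has at most $3!=6$ elements, whence $\dim D'_\beta = \dim I_3 = 7$ and $D'_\beta$ is an irreducible divisor in $Z'$.

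For the image, recall that $\pi$ contracts only the divisor $D'_{nCM} = D'_\gamma$ of non-aCM families; since the general element of $D'_\beta$ is an aCM twisted cubic, $D'_\beta \not\subset D'_\gamma$, so $\pi|_{D'_\beta}$ is birational onto its image and $\dim D_\beta = \dim D'_\beta = 7$. Thus $D_\beta$ is an irreducible divisor in $Z$.

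The only slightly delicate step is the generic finiteness of $\rho$; once that is in place, the rest is a dimension count combined with the irreducibility of $I_3$ already established, and I do not expect any serious obstacle.
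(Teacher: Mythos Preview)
Your proposal is correct and essentially matches the paper, which records the corollary as an ``immediate consequence'' of the dominance of $\rho$ and the irreducibility of $I_3$; the only difference is that you pin down $\dim D'_\beta = 7$ via generic finiteness of $\rho$, whereas in the paper this is implicit from the finiteness of $b$ over the $A_1$-locus (twenty $\beta$-families per surface) established earlier. Your handling of $D_\beta$ via $D'_\beta \not\subset D'_\gamma$ is likewise sound.
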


\bibliographystyle{plain}
\bibliography{divisor}

\end{document}